\title[Westervelt equation]{Weakly nonlinear geometric optics for the  Westervelt equation and recovery of the nonlinearity 
}
\let \i\undefined
\let \Re \undefined
 \DeclareMathOperator{\Re}{Re}
 \let \Im \undefined
 \DeclareMathOperator{\Im}{Im}
\DeclareMathOperator{\supp}{supp}
\DeclareMathOperator{\SO}{SO}
\numberwithin{equation}{section}%
\newcommand{\i}{\mathrm{i}}
\newcommand{\R}{\mathbb{R}}
\renewcommand{\S}{\mathbb{S}}
\newcommand{\p}{\partial}
\renewcommand{\a}{\alpha}
\let\epsilon\varepsilon
\newcommand{\vertiii}[1]{{\left\vert\kern-0.25ex\left\vert\kern-0.25ex\left\vert #1 
    \right\vert\kern-0.25ex\right\vert\kern-0.25ex\right\vert}}
\newtheorem{theorem}{Theorem}
\newtheorem{proposition}{Proposition}
\theoremstyle{definition}
\newtheorem{remark}{Remark}%
\numberwithin{equation}{section}
\let\implies\Rightarrow
\newcommand{\be}[1]{\begin{equation}\label{#1}}
\newcommand{\ee}{\end{equation}}
\renewcommand{\r}[1]{\eqref{#1}}
\author[N. Eptaminitakis] {Nikolas Eptaminitakis}
\address{Department of Mathematics, Purdue University, West Lafayette, IN 47907}
\author[P. Stefanov]{Plamen Stefanov}
\address{Department of Mathematics, Purdue University, West Lafayette, IN 47907}
\thanks{P.S. partly supported by  NSF  Grant DMS-2154489}
\begin{document}

\date{\today}

\begin{abstract}
We study the non-diffusive Westervelt equation in the weakly nonlinear regime. We show that the leading profile equation is of Burgers' type. We show that a compactly supported nonlinearity $\alpha$ can be reconstructed from the tilt of the transmitted high frequency wave packets sent from different directions since those tilts are proportional to the X-ray transform of $\alpha$. 
\end{abstract} 
\maketitle

\begin{center}

\end{center}

\section{Introduction}   
We study the non-diffusive Westervelt equation
\begin{equation}\label{W1}
\partial_t^2 p-\Delta p -\alpha \partial_t^2p^2=0,\qquad t\in\R,\quad x\in\R^n,
\end{equation}
modeling the evolution of the pressure $p(t,x)$ relative to an equilibrium position, in a non-diffusive medium. Here, $0\le \alpha\in C_0^\infty(\R^n)$, 
with $\supp \alpha \subset B(0,R)$ for some $R>0$ is the nonlinearity coefficient. 
 
We  probe the medium with high frequency single phase waves which propagate into the future in direction $\omega\in \mathbb{S}^{n-1}$, sending them from outside the ball $B(0,R)$ and measuring the transmitted wave outside that ball again. This corresponds to a choice of a phase function $\phi: = -t+x\cdot\omega$ below.  
Assuming that such a wave is created by a source $\delta'(t) p_0(x)$, this leads to Cauchy data $(p,p_t) = (p_0,0)$ at $t=0$, see \r{W1a} below. 
Such data create, on the level of the geometric optics, two waves: one propagating in the direction $\omega$ and another one in the direction $-\omega$, with equal amplitudes (in the linear region). We chose $\omega$ so that it points into $B(0,R)$ when issued from  $\supp\chi$, see \r{W1a}, 
so the  high frequency part of the second wave  only will enter the ball, see Figure~\ref{fig:setup}. 

If $|p|\ll1$, then \r{W1} acts essentially as a linear equation, and the nonlinear term can be ignored. If $|p|$ is not small enough, \r{W1} would not be even hyperbolic. Indeed, expanding the $\partial_t^2p^2$ term, we get
\[
(1-2\alpha p)\partial_t^2 p-\Delta p -2\alpha (\partial_tp)^2=0.
\]
Roughly speaking, the speed is $c_\alpha = (1-2\alpha p)^{-1/2}$, assuming $2\alpha p<1$, at least. Of particular interest is the so-called \textit{weakly nonlinear} geometric optics regime for quasilinear (and also, semilinear) PDEs, characterized by the requirement  that the eikonal equation governing the geometry is unaffected by the nonlinearity while the leading amplitudes (profiles) are affected. This is the regime, as we will see below, leading to the predicted and observed effects in nonlinear ultrasound. Its study started in the physics literature and was developed in the math one in \cite{Metivier-Notes,Metivier-Joly-Rauch, Joly-Rauch_just, Donnat-Rauch_dispersive, Dumas_Nonlinear-Geom-Optics, JMR-95, Rauch-geometric-optics}, and other works for first order symmetrizeable hyperbolic systems of PDEs.  The idea is that the amplitude of the wave should be related to is wavelength in a particular way, note the prefactor $h$ in \r{W1a} below. It turns out that the right scaling for the effects described is when the amplitude is of the same order of magnitude as the wavelength. Since those two quantities are expressed in different units, this has to be understood relative to the space and time scale; 
and also, in an asymptotic sense, as those two quantities both tend to zero. 

We analyze the Westervelt equation in the weakly nonlinear regime and show that the leading profile equation is of Burgers' type. In the applied literature, a reduction to Burgers' equation for a mono phase wave is well-known and derived by ignoring some ``small'' quantities based on a priori but not quite explicit assumptions, see also section~\ref{sec_1D}. A very non-trivial step is to show that there is an exact solution  close to the approximate one. We use a result by Gu\`es \cite{Gues}, and for this purpose we reduce \r{W1} to a first order system. Note that the traditional existence and uniqueness theorems for nonlinear PDEs are not useful here since they require smallness in some high $H^s$ norm, which we do not have. 

We also study the inverse problem of recovery of $\alpha$ from measurements of the wave packets described above once they exit $\supp\alpha$. We show that the tilt on the wave is proportional to the X-ray transform of $\alpha$ in the direction $\omega$, which allows us to recover $\alpha$. When $\alpha$ is small, linearizing the second harmonic leads to the X-ray transform of $\alpha$, as well. 
The inverse problem for the Westervelt equation has been studied earlier in \cite{acosta2021nonlinear} and in \cite{UhlmannZhangNonlinearAcoustics} using the higher order linearization method (the latter dealt with more general hyperbolic quasilinear equations including the Westervelt equation as a special case) and in \cite{Kaltenbacher_2021} using linearization. Those works consider small, relative to ours, signals where the effect of $\alpha$ is pushed to lower order terms. The higher order linearization  method was pioneered in \cite{KLU-18} and \cite{LassasUW_2016}, and used later in 
\cite{Hintz1,Hintz2,LassasUW_2016, LUW1, lassas2020uniqueness, Hintz-U-19, uhlmann-zhang-2021inverse, OSSU-principal}. On the other hand, inverse problems for semilinear wave type PDEs  were studied by the second author and S\'a Barreto in \cite{S-Antonio-nonlinear, S-Antonio-nonlinear2} in regimes where the nonlinearity affects the principal term in \cite{S-Antonio-nonlinear} and the subprincipal one in \cite{S-Antonio-nonlinear2}, because of the nature of the inverse problem there. 

Finally, we want to mention that nonlinear effects are already present in ultrasound imaging of the human body \cite{Wells_1999, humphrey2003non}.   The nonlinearity can create problems if the imaging method is based on a linear model, or it can be useful by creating higher order harmonics, most importantly the second harmonic, which provides additional imaging opportunities.

\section{Main results}  \label{sec_2}
Consider \r{W1} with  Cauchy data at   $t=0$ of the kind
\begin{equation}\label{W1a}
  p  |_{t=0}
  = 2h \chi(x)\cos\frac{x\cdot\omega}{h},\quad p_t |_{t=0}=0,
\end{equation}
where $h>0$ is a small parameter, equal to the wavelength over $2\pi$, and $\chi\in C_0^\infty(\R^n)$, depending on $\omega\in \mathbb{S}^{n-1}$,  with $\supp \chi\cap \overline{B(0,R)}=\emptyset$. 
Because of that, the probing wave $p$ satisfies \eqref{W1} with $\alpha =0$ for $|t|$ small, i.e. initially the wave is not affected by the nonlinearity, see  Figure~\ref{fig:setup}.  
\begin{figure}[b] %
  \centering
   \includegraphics[ %
   scale=0.35,page=1]{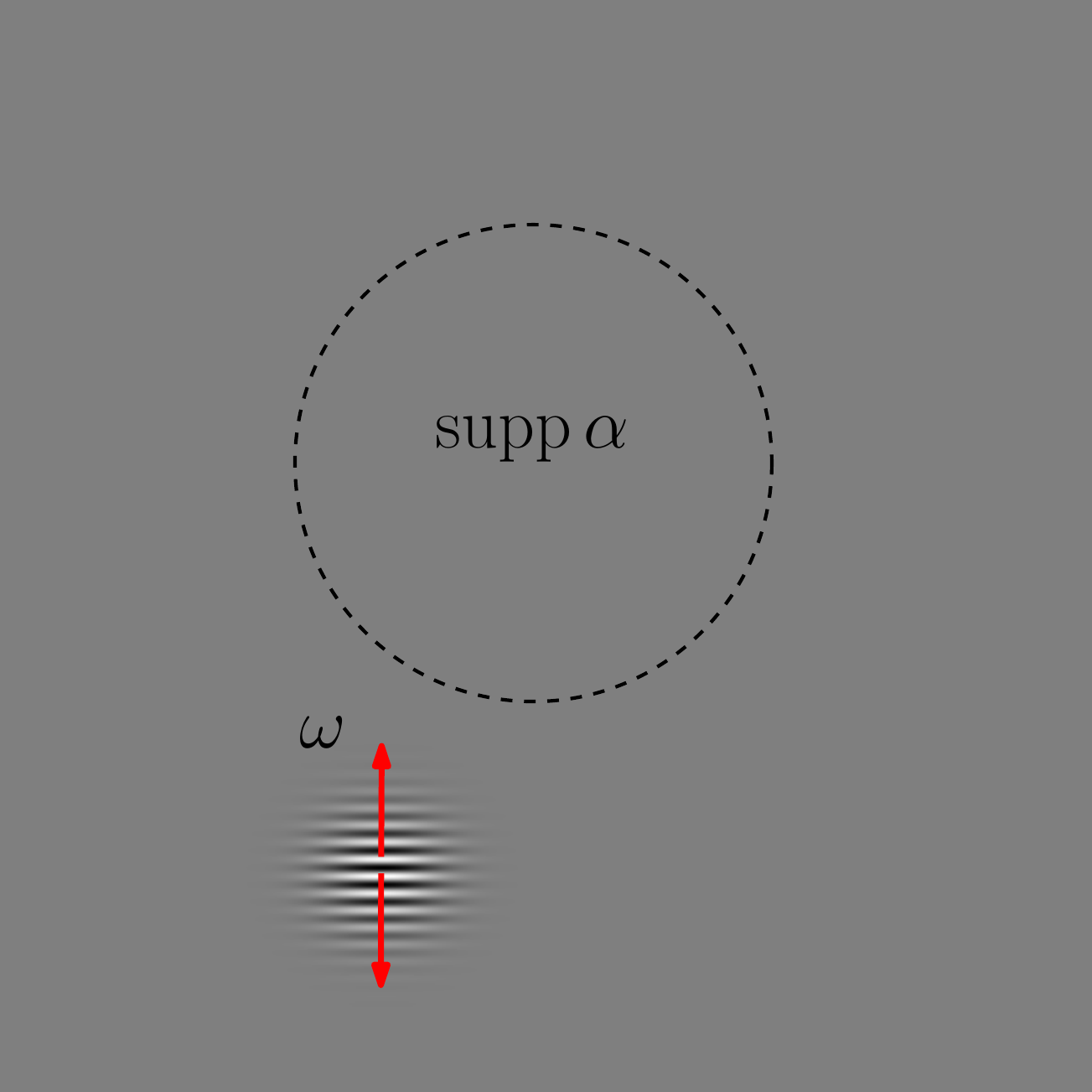}   \hspace{0.4in} 
   \includegraphics[ %
   scale=0.35,page=2]{Westervelt-fig4.pdf}\hspace{0.4in} 
      \includegraphics[ %
   scale=0.35,page=3]{Westervelt-fig4.pdf}
  \caption{The setup. Left: at $t=0$. Center: at $t=t_0>0$. Right: at the terminal time $t=T$. The amplitudes of each wave packet in the center and on the right are equal to $1/2$ of that of the initial one on the left.}
  \label{fig:setup}
\end{figure}
As explained in the introduction, we chose $\chi$ and $\omega$ so that rays from $\supp\chi$ in the direction $\omega$ may hit $\overline{B(0,R)}$ but rays in the opposite direction do not.  

For the purpose of obtaining the Radon transform of $\alpha$ along all rays through $\supp\alpha$, we would need a family of such $\chi$, and a uniform estimate below with respect to that family. With the notation $x= (x',x^n)$, choose $\chi_0\in C_0^\infty(\R^n)$ with $\supp\chi_0\subset B(z_0,R_0)$, where $z_0 = (0,-R-R_0)$ with some $R_0>0$. This function satisfies the requirements for $\omega_0=(0,\dots,0,1)$  (in view of our dimension restrictions,   either $\omega_0= (0,1)$ or $\omega_0= (0,0,1)$), and the rays through $\supp\chi$ in the direction $\omega_0$ include $x'=0$ when $\chi(0)\not=0$.

To generate the family of $\chi$ we need, we introduce two parameters: a shift parameter $y'$ with $|y'|\le R$ to shift $z_0$ to $z_0+(y',0)$ in the disk $x^n=-R-R_0$, $|x'|\le R$, and an  orthogonal matrix $B\in \SO(n)$ to rotate the configuration. In other words, %
choosing $ x = B(\tilde x+(y',0))$, $ \omega = B\omega_0$, 
we have $\tilde x= B' x - (y',0)$, so choosing $\eta=(y',B)$ as a parameter, we take $\chi_\eta(x) =\chi_0(B' x - (y',0)) $, $ \omega = B\omega_0$. 

In the weakly nonlinear regime, one seeks solutions $p$ with a leading term $hU_0(t,x,\phi/h)$, where $U_0$ is $2\pi$-periodic in $\theta$. Our first result states that such solutions, with initial conditions \r{W1a}, exist.

\begin{theorem}\label{thm1}
Let $n=2$ or $n=3$. With notation as above, assume 
\be{thm1_eq1}
(\max|\chi |) \max_x\int\alpha(x+\sigma\omega)d\sigma <1.
\ee
Then for every $T>0$,  $b_0\in (0,1)$, there exists $h_{T,\chi,b_0}>0$ so that \r{W1}, \r{W1a} has a unique solution $p$ satisfying $1-2\a p\geq b_0>0$  for $0\le t\le T$ and $0<h<h_{T,\chi,b_0}$. Moreover, $p=p^-+p^+$ mod $O(h^\infty)$, where $p^-$ is an asymptotic solution of the linear equation ($\alpha=0)$ propagating in the direction $-\omega$, while the part $p^+$ of the solution propagating in the direction of $\omega$ satisfies  
\be{thm1_eq2}
p^+(t,x,h) =hU_0(t,x,\phi/h)+ h^2R(t,x,h),\quad 0\le t\le T, 
\ee
where $\phi=-t+x\cdot\omega$,  $U_0(t,x,\theta)$ is $2\pi$-periodic,  solves  Burgers' equation
\begin{equation}\label{W_B}
(\partial_t+ \omega\cdot\nabla_x) U_0  + \alpha U_0\partial_\theta U_0 =0, \quad  U_0(0,x,\theta) =\chi(x)\cos\theta,
\end{equation}
and $R\in \mathcal{B}_\sigma^3$ for some $\sigma>0$, see \eqref{bmr}.

If $\chi=\chi_\eta$ as above, and in particular $\omega=B'\omega_0$, then the corresponding $h_{T,\chi_\eta,b_0}$ can be taken uniform in $\eta$. 
\end{theorem}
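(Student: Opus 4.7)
My strategy is to recast \r{W1} as a first-order quasilinear symmetric hyperbolic system and invoke Gu\`es' theorem \cite{Gues} on weakly nonlinear geometric optics. Expanding $\p_t^2(p^2)=2p\,p_{tt}+2p_t^2$ rewrites \r{W1} as $(1-2\a p)p_{tt}-\Delta p=2\a p_t^2$; introducing $v=(p_t,\nabla p)$ and $w=p$ with $w_t=v_1$ then yields a symmetric hyperbolic system whose symmetrizer $A_0=\mathrm{diag}(1-2\a w,I_n)$ is positive definite exactly where $1-2\a p>0$. Thus the bound $1-2\a p\ge b_0$ is what maintains strict hyperbolicity throughout the argument, and the strict inequality \r{thm1_eq1} will supply the margin needed to preserve it.

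Next I would split $p=p^++p^-$ modulo $O(h^\infty)$, where each $p^\pm$ carries the phase $\phi_\pm=\mp t+x\cdot\w$. By the geometric assumption on $\supp\chi$ and $\w$, the rays of $p^-$ avoid $\o{B(0,R)}\supset\supp\a$, so $p^-$ solves the linear wave equation to all orders and all nonlinear work is concentrated on $p^+$. Inserting the WKB ansatz $p^+\sim hU_0(t,x,\phi/h)+h^2U_1(t,x,\phi/h)+\cdots$ with $\phi=-t+x\cdot\w$ and each $U_j$ being $2\pi$-periodic in $\theta$, the eikonal equation is automatic since $|\w|=1$, so the $h^{-1}$ contributions cancel. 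A direct computation at order $h^0$ gives
\[
\p_\theta\big[(\p_t+\w\cdot\nabla)U_0+\a U_0\p_\theta U_0\big]=0.
\]
The bracket is therefore $\theta$-independent; averaging over $\theta$ and using the zero-mean property (inherited from $U_0|_{t=0}=\chi\cos\theta$ and preserved in time by the equation itself) forces the bracket to vanish, yielding exactly \r{W_B}.

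To produce $U_0\in C^\infty$ on $[0,T]$, I would solve \r{W_B} by the method of characteristics: along $t\mapsto x_0+t\w$ the equation reduces to a one-parameter family of one-dimensional Burgers' equations $V_t+\a(x_0+t\w)VV_\theta=0$ with $V(0,\theta)=\chi(x_0)\cos\theta$, whose no-shock criterion reads $|\chi(x_0)|\int_0^T\a(x_0+s\w)\,ds<1$. Since $\a\ge 0$ is compactly supported, this is implied uniformly in $x_0$ and $T$ by \r{thm1_eq1}. The higher profiles $U_1,\ldots,U_N$ are then constructed recursively from linear transport equations whose solvability is guaranteed by the vanishing-mean conditions imposed at the previous order, and the truncated sum $p_{\mathrm{app}}^+=\sum_{j=1}^{N+1}h^jU_{j-1}(t,x,\phi/h)$ solves \r{W1} modulo $O(h^N)$ on $[0,T]$ for $N$ chosen as large as needed.

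Finally, I would apply Gu\`es' stability theorem to the symmetric hyperbolic system with $p_{\mathrm{app}}^+$ as the background, obtaining for $h<h_{T,\chi,b_0}$ an exact solution $p^+$ such that $R=(p^+-hU_0)/h^2\in\calB_\sigma^3$; hyperbolicity $1-2\a p\ge b_0$ persists because $\|p\|_{L^\infty}=O(h)$ and \r{thm1_eq1} is a strict open condition, leaving room for the $O(h)$ correction. Uniformity of $h_{T,\chi_\eta,b_0}$ in $\eta=(y',B)$ follows because $\chi_\eta$ differs from a fixed $\chi_0$ only by a rigid motion varying over the compact parameter set $\{|y'|\le R\}\times\SO(n)$, so all seminorms of $\chi_\eta$ and all relevant geometric quantities are uniformly controlled. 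The main obstacle will be the careful invocation of \cite{Gues}: one must verify that the symmetrized Westervelt system fits its hypotheses and that the stability estimate places the remainder in the specific anisotropic space $\calB_\sigma^3$ with constants uniform in $\eta$.
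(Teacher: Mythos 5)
Your proposal follows essentially the same route as the paper: rewrite \r{W1} as a first-order quasilinear symmetric hyperbolic system, build the two-wave WKB ansatz whose forward leading profile solves Burgers' equation (no shock on $[0,T]$ exactly because of \r{thm1_eq1}, via the method of characteristics), and invoke Gu\`es' theorem to produce an exact solution near the approximate one, with uniformity in $\eta$ obtained from uniform bounds over the compact parameter set. Three details the paper handles that you gloss over, all fixable: Gu\`es' theorem needs coefficients that are globally smooth and symmetrizable, so the factor $1-2\alpha p$ must be truncated by a cutoff $\psi(2\alpha p)$ before the reduction; the background for Gu\`es must be the full approximate solution $p_*^-+p_*^+$, since with only the forward packet as background the Cauchy data \r{W1a} are missed by an $O(h)$ term, far outside the $h^M\mathcal{B}^m_\rho$ perturbation class; and the paper keeps the data fixed and rotates/translates $\alpha$ instead, because the $\omega/h$ dependence in \r{W1a} prevents treating changes of $\omega$ as small perturbations of the data (your alternative of redoing the construction for each $\eta$ with a uniform $\rho$ is also viable, provided the approximate solutions are checked to lie in a fixed $\mathcal{A}^4_\rho$ uniformly in $\eta$).
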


Our next theorem addresses the inverse problem. 

\begin{theorem}\label{thm2} Let $n=2$ or $n=3$. 
Let $T>0$ be such that all the rays through $\supp\alpha$ at $t=0$ in the direction $\omega$ do not meet  $\overline{B(0,R)}$  anymore for $t\ge T$. Assume \r{thm1_eq1}. Then $U_0|_{t=T}$,  and therefore, $p(T,x;\omega)$, known for $x\not\in  \overline{B(0,R)}$ and $0< h\ll1$ determines the line integrals of $\alpha$ along the rays in the direction of $\omega$ passing through $\supp\chi$, uniquely. In particular, varying $\chi$ and $\omega$, we can determine $\alpha$ uniquely from the knowledge of the principal term in \r{thm1_eq2} for $0<h\ll1$. 
\end{theorem}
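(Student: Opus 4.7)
The plan is to reduce the profile equation \eqref{W_B} to a family of standard inviscid Burgers equations along characteristics, integrate explicitly, and read off the line integral of $\alpha$ from the tilt of the resulting wave packet. Setting $V(s,\theta):=U_0(s,x_0+s\omega,\theta)$ for a fixed $x_0\in\supp\chi$, equation \eqref{W_B} becomes
\begin{equation*}
\partial_s V+\alpha(x_0+s\omega)\,V\,\partial_\theta V=0,\qquad V(0,\theta)=\chi(x_0)\cos\theta.
\end{equation*}
The nondecreasing change of variable $A(s):=\int_0^s\alpha(x_0+\tau\omega)\,d\tau$ (with $V$ constant in $s$ on intervals where $\alpha$ vanishes) turns this into the standard inviscid Burgers equation in the variable $A$. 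Assumption \eqref{thm1_eq1} forces $|\chi(x_0)|\,A(s)<1$ for all $s\le T$, so characteristics do not cross and the classical implicit solution
\begin{equation*}
U_0(s,x_0+s\omega,\theta)=\chi(x_0)\cos\bigl(\theta-A(s)\,U_0(s,x_0+s\omega,\theta)\bigr)
\end{equation*}
is globally valid on $[0,T]$.

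At $s=T$ the ray has left $\overline{B(0,R)}$, so $A(T)=\int_{\R}\alpha(x_0+\sigma\omega)\,d\sigma=:X\alpha(x_0,\omega)$, the X-ray transform of $\alpha$ along the line through $x_0$ in direction $\omega$. A short implicit differentiation in the last display shows that the $2\pi$-periodic function $\theta\mapsto U_0(T,x_0+T\omega,\theta)$ attains an extremum equal to $\chi(x_0)$ at
\begin{equation*}
\theta^{*}=X\alpha(x_0,\omega)\,\chi(x_0),
\end{equation*}
so whenever $\chi(x_0)\neq0$, observation of $U_0(T,x_0+T\omega,\cdot)$ determines $X\alpha(x_0,\omega)$ by a single division. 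Letting $\chi=\chi_\eta$ and $\omega=B\omega_0$ vary through $\eta=(y',B)$ sweeps all lines meeting $\supp\alpha$, and injectivity of the X-ray transform on compactly supported continuous functions then reconstructs $\alpha$ uniquely.

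Finally, to pass from the physical measurement $p(T,x;\omega)$ at $x\notin\overline{B(0,R)}$ and $0<h\ll1$ to the principal profile $U_0|_{t=T}$, one uses the standard WKB reading of \eqref{thm1_eq2}: the phase $\phi=-T+x\cdot\omega$ has nonvanishing gradient, so letting $h\to0^+$ at fixed $x$ (or localizing $x$ at a scale intermediate between $h$ and $1$) recovers the $2\pi$-periodic function $\theta\mapsto U_0(T,x,\theta)$; the counter-propagating piece $p^-$ satisfies the linear equation with data \eqref{W1a}, so it is explicitly computable and subtracted beforehand. The essential nonlinear content of the argument is the implicit formula for $U_0$ above; the only delicate point is ensuring the no-shock condition \eqref{thm1_eq1} along every relevant ray, and this is exactly the hypothesis of the theorem.
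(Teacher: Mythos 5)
Your proposal is correct and follows essentially the same route as the paper: restrict the profile equation to rays, rescale time by $A(s)=\int_0^s\alpha$ to get the standard Burgers equation, use \eqref{thm1_eq1} to rule out shocks, and read the line integral of $\alpha$ from the tilt of $U_0|_{t=T}$ relative to the linear profile before inverting the X-ray transform. The only (inessential) difference is that you locate the shifted extremum $\theta^*=\chi(x_0)\,X\alpha(x_0,\omega)$, while the paper tracks the horizontal shift $d=k\int_0^T\alpha$ of an arbitrary level set $k$ between consecutive zeros, and it recovers $U_0$ from the data via the Fourier-coefficient argument of \cite[Proposition~3.2]{S-Antonio-nonlinear2} rather than your WKB sketch.
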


\section{Geometric optics, profile equations} \label{sec:GO}
\subsection{The linear case}\label{sec:linear}
We start with $\alpha=0$ since the solution $p$ is expected to propagate with unit speed, and therefore, to stay linear for $|t|\ll1$. In fact the latter can be proved using the Duhamel's principle treating the nonlinearity as a source, assuming existence of a solution. 

The geometric optics construction in the linear case is well known. We will replace the initial condition $2\chi(x)\cos(x\cdot\omega/h)$ in \r{W1a} by $2\chi(x)\exp(\i\, x\cdot\omega/h)$ (we can cancel the factor $h$ in the linear case), and take the real part of the solution later. Note that we cannot do this for the nonlinear equation. 
Looking for a solution of the kind %
$p=e^{\i \phi/h}a$, $a=a_0+ha_1+\dots$, we get that there are two phase functions equal to $x\cdot\omega$ for $t=0$: $\phi_\pm : = \mp t+x\cdot\omega$. We used the notation $\phi$ for $\phi_+$ above, and will return to it later. Then the ansatz is
\be{go0}
p = e^{\i \phi_+/h}a_++ e^{\i \phi_-/h}a_-,
\ee
where $a_\pm \sim a_{\pm,0} + h a_{\pm,1}+\dots$. 
This implies the initial conditions
\[
a_++a_-=2\chi, \quad a_--a_+=0, \quad \text{at $t=0$},
\]
therefore,
\be{go1}
a_+ = a_-=\chi(x),\quad \text{at $t=0$}.
\ee
Conjugating the wave operator $\Box$ with the two exponentials, we get
\[
e^{-\i \phi_\pm/h}\Box e^{\i \phi_\pm/h} = 2\i h^{-1}(\mp \partial_t - \omega\cdot\partial_x) + \Box.
\]
The transport equations are then derived from 
\[
2\i h^{-1}(\mp \partial_t - \omega\cdot\partial_x)a_\pm + \Box a_\pm=0.
\]
We get
\begin{equation}\label{g01aa}
(\pm \partial_t + \omega\cdot\partial_x)a_{\pm,0}=0, \quad (\pm \partial_t + \omega\cdot\partial_x)a_{\pm,j}=\frac1{2\i} \Box a_{\pm,j-1}, \quad j\ge1.
\end{equation}
Then by \r{go1},
\be{go1a}
a_{\pm,0} = \chi(x\mp t \omega),
\ee
and the transport equations for $a_{\pm,j}$, $j\ge1$, can be solved by integration using the zero initial conditions implied by \r{go1}. Indeed, we can rotate the coordinates to assume $\omega = (0,\dots,0,1)$. We will use the notation $x=(x',x^n)$. Then $\pm \partial_t + \omega\cdot\partial_x = \pm \partial_t + \partial_{x^n}$. Pass to characteristic variables $(t,x)\mapsto (t,x',y^n=x^n \mp t)$; then one integrates in $t$. 
For example, one can compute 
\be{go1b}
a_{+,1}= \frac{\i t}2 \Delta_{x'}\chi(x',x^n-t),
\ee
with a similar formula for $a_{-,1}$. All coefficients $a_{+,j}$ with $j\ge1$ are sums of terms involving positive powers of $\Delta_{x'}$ applied to $\chi(x',x^n-t)$ (and $x^n$ derivatives and powers of $t$).  In particular, if we choose $\chi$ so that $\chi(x',x^n)=1$ on some open set of $x'$, say in the ``center of the beam'', then all those higher order coefficients would vanish there, and the principal ones would be just $1$. The coefficients corresponding to $j$ odd are pure imaginary, and the ones corresponding to $j$ even are real, assuming $\chi$ real. 

Fix $0<t_0\ll1$ so that the support of the so-constructed solution is still outside $\overline{B(0,R)}$ for that time, see Figure~\ref{fig:setup}. Set 
\be{go2}
\begin{split}
p_0^+ &= \Re (e^{\i \phi_+/h} ha_+)|_{t=t_0} = h\big(\cos(\phi_+/h)\Re a_+ - \sin(\phi_+/h)\Im a_+\big)\big |_{t=t_0}, \\
 q_0^+ &= \Re \partial_t (e^{\i \phi_+/h} ha_+)|_{t=t_0},
\end{split}
\ee
see \r{go0}.  
 We will use $p_0^+$ as an initial condition at $t=t_0$ for the solution in the nonlinear region, and we will verify that its time derivative coincides with $q_0^+$. Note that the real Fourier expansion of $p_0^+$ in the $\theta=\phi_+/h$ variable (see next section) contains $\cos\theta$ and $\sin\theta$ only.

\subsection{The nonlinear case} \label{sec_nonl}

We want to extend the construction in the previous section  to allow the ``$+$'' term in \r{go0} to enter the ball $B(0,R)$, where the nonlinearity $\alpha$ is supported. The term corresponding to $\phi_-$ there will never hit $B(0,R)$, so it will stay linear. 

\subsubsection{The first profile equation} 
We are looking for a solution of \r{W1}, modulo an $O(h^\infty)$ error, of the form
\begin{equation}\label{W2}
p\sim h U(t,x,\phi/h)
  \sim hU_0 (t,x,\phi /h) + h^2 U_1 (t,x,\phi  /h)+\dots,
\ee
where the profiles $U_j(t,x,\theta)$ are $2\pi$-periodic in $\theta$. Here, $\phi= -t+x\cdot\omega$, which we called $\phi_+$ above. We are not including $\phi_-$ because we want this solution to be an extension of the linear solution corresponding to $\phi_+$ (only), and for $|t-t_0|\ll1$, it will be equal to it, actually. We are taking the initial condition
\be{W2b}
hU|_{t=t_0} =  p_0^+, 
\ee
see \r{go2}. This implies the following 
\begin{equation}\label{W2a}
   U_0(t_0,x,\theta) =\chi(x-t_0\omega)\cos\theta,
 \end{equation} 
see \r{go1a}.  
Plug \r{W2} into \r{W1} and compare the equal powers of $h$. The eikonal equation stays the same, and $\phi$ solves it. 
The next term in the expansion yields the following transport equation
\begin{equation}\label{W3}
(\partial_t+ \omega\cdot\nabla_x) \partial_\theta U_0 + \alpha  \partial_\theta ( U_0 \partial_\theta U_0)=0. 
\end{equation}
Integrating in $\theta$, we get
\begin{equation}\label{W4}
(\partial_t+ \omega\cdot\nabla_x) U_0  + \alpha U_0\partial_\theta U_0 =\beta(t,x),
\end{equation}
with a yet to be determined $\beta$. Since we are constructing an ansatz here, we can make assumptions, and we assume $\beta=0$. In fact, we will show below that to have a $2\pi$-periodic solution of the lower order profile equation, we must have $\beta=0$. 
Make the change of variables $(s,y) = (t-t_0,x-t \omega)$. 
Then $(t,x) = (s+t_0, y+(s +t_0 )\omega)$, and $\partial_s = \partial_t+\omega\cdot \nabla_x$. 
Passing to the variables $(s,y)$, we get 
\begin{equation}\label{W4a}
\partial_s U_0 +  \alpha U_0 \partial_\theta U_0=0, \quad U_0|_{s=0}= \chi(y\cdot\omega) \cos\theta. 
\end{equation}
This is Burgers' equation on a cylinder, i.e., the spatial variable $\theta$ is $2\pi$ periodic. 

\subsubsection{Analysis of the Burgers' equation} \label{sec_Burgers}
Thinking of $y$ and $\omega$ as parameters, equation \r{W4a} takes the form
\begin{equation}\label{W6}
\p_s U_0  + \alpha U_0\partial_\theta U_0 =0, \quad U_0|_{s= 0} = M\cos\theta. 
\end{equation}
where $M=\chi(y\cdot\omega)$, and $\alpha = \alpha(y+s\omega)$. We think of $\alpha$ as a function of $s$. The method of characteristics says that we have to solve
\begin{equation}\label{W6b}
\frac{d}{d s} \theta = \alpha u, \quad \frac{d}{d s}u=0
\end{equation}
with initial conditions $u=M\cos q$, $\theta=q$ for $s=0$. Then $u=M\cos q$ for all $s$, and $\theta = q+ u\int_0^s\alpha(\sigma)\, d\sigma$. Therefore, $U_0$, up to the shock formation, is an implicit solution of the equation
\begin{equation}\label{implicit}
U_0 = M\cos\Big(\theta -  U_0\int_0^s\alpha(\sigma)\, d\sigma\Big). 
\end{equation}
To determine the shock time, make the change of variables $\tilde s=\int_0^s\alpha(\sigma)\, d\sigma$ (assuming for a moment $\alpha>0$ all the time). Then \r{W6} transforms into 
\be{W6c}
\p_{\tilde s} U_0  +   U_0\partial_\theta U_0 =0, \quad U_0|_{\tilde s= 0} = M\cos\theta.
\ee
Then it is well known that first shock develops when $\tilde s=1/M$. Therefore, the shock time for $s$ is the solution of
\[
\int_0^s\alpha(\sigma)\, d\sigma= \frac1M. 
\]
The minimal one corresponds to $M=\max\chi$. In particular, if 
\be{W-cond}
 \int\alpha(\sigma)\,d\sigma< \frac1{\max\chi},
\ee
then there is no shock, and $U_0$ exists for all $s$. 

It is easy to see now that the condition $\alpha>0$ can be removed. With $U_0$ defined as the solution of \r{W6c}, as a function of $\tilde s$, and $\theta$; setting $\tilde s=\tilde s(s)$ produces a solution (which we know is unique on the interval of existence) of \r{W6}.

Next, $U_0$ is periodic in $\theta$ by \eqref{implicit}. Indeed, set 
\[
F(t,x,\theta,u)=u-M\cos\Big(\theta-u\int_0^s\alpha(\sigma)\,d\sigma\Big).
\]
Then $U_0$ is defined as the unique solution of $F=0$, up to the shock time, and since $F$ is $2\pi$-periodic in $\theta$, so  is $U_0$.

\subsubsection{The second profile term} 
The next term in the expansion yields
\begin{equation}\label{W5}
(\partial_t+ \omega\cdot\nabla_x)\partial_\theta U_1  + \alpha\partial_\theta ^2( U_0U_1)= 
\frac12 \Box U_0 + \alpha\partial_t\partial_\theta U_0^2, \quad U_1|_{t=t_0}= \i a_{+,1}\sin\theta,
\end{equation}
see \r{go1b}, and note that $\i a_{+,1}$ is real. 
Integrating in $\theta$, one obtains a linear PDE for $U_1$ with a source term.

We look at the ``DC'' component $U_0^0$ of $U_0$, i.e. the zeroth Fourier mode, which is also the mean value in $\theta$, rescaled. If we subtract it from $U_0$, then $\beta=0$ in \r{W4}. In \r{W5}, the only term with possibly non-trivial zeroth Fourier component would be $\Box U_0^0$. So we get $\Box U_0^0=0$,  with $U_0^0=0$ for $t=t_0$ by the initial condition \eqref{W2a} for $U_0$ (no zeroth component there). Also,   we have  $\partial _t U_0^0 \big|_{t=t_0}=0$ by the second equation in \eqref{go2}.
Thus $U_0^0=0$ and $\beta=0$. 

We will now check, using the fact that $\beta=0$, that the solutions to \eqref{W5} are  periodic for $t\in [t_0,T]$, where $T$ is such that the leading order profile $U_0$ is smooth for $t$ in an open interval $I\supset[t_0,T]$.
First integrate \eqref{W5} with respect to $\theta$ to get 
  \begin{equation}\label{tr}
    (\partial_t+\omega\cdot \nabla_x +\alpha U_0 \partial_\theta)U_1+(\alpha \partial_\theta U_0) U_1=\frac12\int_0^\theta \Box U_0 (\sigma)d\sigma+\alpha \partial_t U_0^2+\beta_1(t,x),
  \end{equation}
  where we wrote $U_0(\sigma)=U_0(t,x,\sigma)$.
Since $U_0$ is periodic, $X:=\partial_t+\omega\cdot \nabla_x +\alpha U_0 \partial_\theta$ is a smooth vector field on the cylinder $I\times \R^n\times \S^1$.
We can take $t$ as a parameter for its integral curves, which are then defined for $t\in I$.
Moreover, $\alpha \partial_\theta U_0 $ and $\alpha \partial_t U_0^2+\beta_1(x,t) $ are smooth $2\pi$-periodic functions with respect to $\theta$, and
 $\int_0^{\theta}\Box U_0(\sigma)d\sigma$ is periodic because it is the integral of a periodic function which has zero mean over the interval $[0,2\pi)$.
So \eqref{tr} is a transport equation on the cylinder, with the initial data (see \eqref{W5}) being smooth and compactly supported on the embedded hypersurface $S=\{t_0\}\times \R^n\times \S^1$, to which $X$ is nowhere tangent.
Then by \cite[Theorem 9.51]{LeeSmooth} for instance,   we conclude that there exists a neighborhood of $S$ in the cylinder (which is constructed by following the integral curves of $X$ for $t\in I$, so it can be taken to be all of $I\times \R^n\times \S^1$)  on which there exists a unique smooth solution to \eqref{tr};  thus the solution $U_1$ is $2\pi$-periodic in $\theta$.

\section{Well posedness}

In this section we show that near the asymptotic solution we constructed there exists an actual solution of \eqref{W1}, using a result of Gu\`es (\cite{Gues}), which we now state for the reader's convenience. Note that the classical well-posedness results for nonlinear PDEs with small data 
do not work here since the $H^s$ norms of the initial conditions are not small when $s\ge2$. 

Consider a system of PDEs written as
\begin{equation*}\label{gues_system}
  L(a(y),u(y))u(y)=F(a(y),u(y)),\quad y=(t,x)\in \R^{n+1},
\end{equation*}
where $u:\R^{n+1}\to \R^N$ is the unknown function, $a:\R^{n+1}\to \R^{N'}$ is a given  function, and the operator $L$ is of the form 
\begin{equation*}
  L(a,u):=\partial_t+\sum_{j=1}^{n}A_j(a,u)\partial_{x^j},\qquad A_j\in C^\infty(\R^{N'+N}; \R^{N\times N} ).
\end{equation*}
The presence of $a(y)$, rather than just setting $a(y)=y$, allows for dependence on parameters; this is useful since in Theorem~\ref{thm:gues} below, $a$ belongs to a class of functions with the estimate uniform in that class. 
It assumed that $L$ is hyperbolic symmetrizeable, in the sense that there exists a positive definite symmetric matrix $S\in C^\infty(\R^{N+N'};\R^{N\times N})$ such that $SA_j$ is symmetric for all $j$.
It is also assumed that    $F\in C^\infty(\R^{N'+N};\R^N)$ satisfies $F(0,0)=0$.

The following  spaces of functions depending on a small parameter $h$ will be needed.
For $T>0$ fixed, $\rho >0$, and $m\in \mathbb{Z}_{\geq 0}$, write
\begin{equation}\label{amr}
\begin{split}
  \mathcal{A}_\rho^m& =  \Big\{u_h\in C^0([0,T];W^{m,\infty}(\mathbb{R}^n)):\; \| u_h(t)\|_{L^\infty(\mathbb{R}^n)}\leq \rho, \text{and}
  \\
  &\qquad \| (h \partial_x)^\alpha u_h(t)\|_{L^\infty(\mathbb{R}^n)}\leq \rho { h} \text{ for } 1\leq |\alpha|\leq m, \;h\in (0,1], \;t\in [0,T]\Big\} ,
\end{split}
\end{equation}
and
\begin{equation}\label{bmr}
\begin{split}
  \mathcal{B}_\rho^m = \Big\{u_h\in C^0([0,T];  H^{m}(\R^n)):\;&  \| (h \partial_x)^\alpha u_h(t)\|_{L^2(\R^n)}\leq \rho \\
  &\text{for } |\alpha|\leq m, \; h\in (0,1],\; t\in [0,T]\Big\}.
\end{split}
\end{equation}
Now for  $\underline{a}_h\in h^M\mathcal{B}_\rho^{m}+\mathcal{A}_{\rho}^{m+1}$, and for $g_h\in h^M\mathcal{B}^m_\rho+\mathcal{A}^{m+1}_{\rho}$  independent of $t$, consider the Cauchy problem 
\begin{equation}\label{Cauchy1}
    \begin{aligned}
  L(\underline{a}_h,\underline{u}_h)\underline{u}_h&=F(\underline{a}_h,\underline{u}_h),\\
  \underline{u}_h(0,x)&=g_h(x),
\end{aligned}    
\end{equation}
for which we seek an exact solution $\underline{u}_h$.

\begin{theorem}[{\cite{Gues}}]\label{thm:gues}
Let $m>n/2+1$ and $M\geq m$. Also let $T$ be fixed. For given $\rho>0$, there exists $h_{\rho,T}>0$ and $\sigma_{\rho,T}>0 $ such that for all $ \underline{a}_h\in h^M\mathcal{B}_\rho^{m}+\mathcal{A}_{\rho}^{m+1}$, if $v_h\in \mathcal{A}_{\rho}^{m+1}$ is an approximate solution of the Cauchy problem \eqref{Cauchy1} in the sense that for some $a_h\in \underline{a}_h+h^M\mathcal{B}_\rho^{m}$, 
 and $r_h\in \mathcal{B}_\rho^m$, it satisfies
\begin{equation*}
  L(a_h,v_h)v_h=F(a_h,v_h)+h^Mr_h,
\end{equation*}
then for all given Cauchy data $g_h\in v_h \big|_{t=0}+h^M\mathcal{B}_\rho^m$, the Cauchy problem \eqref{Cauchy1} admits, for $0<h<h_{\rho,T}$, a unique solution $\underline{u}_h\in v_h+h^M \mathcal{B}^m_\sigma$ on $[0,T]\times \R^n$.
\end{theorem}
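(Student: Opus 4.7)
The strategy is to cast the problem perturbatively around the approximate solution: write $\underline{u}_h = v_h + h^M w_h$ and derive the Cauchy problem satisfied by the correction $w_h$. Subtracting the approximate equation $L(a_h, v_h) v_h = F(a_h, v_h) + h^M r_h$ from \eqref{Cauchy1} and Taylor-expanding $L$ and $F$ around $(a_h, v_h)$, one obtains a symmetric hyperbolic system of the form
\begin{equation*}
L(a_h + h^M \tilde a_h,\, v_h + h^M w_h)\, w_h \;=\; G_h(v_h, w_h) - r_h, \qquad w_h|_{t=0} \in \mathcal{B}^m_\rho,
\end{equation*}
where $\tilde a_h := h^{-M}(\underline{a}_h - a_h) \in \mathcal{B}^m_\rho$ by the hypothesis $a_h \in \underline{a}_h + h^M \mathcal{B}^m_\rho$, and $G_h$ is smooth in its arguments and collects the expansion remainders. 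The goal then becomes to solve this for $w_h$ in a ball of $\mathcal{B}^m_\sigma$ on $[0,T]$, uniformly for $0 < h < h_{\rho,T}$.

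The heart of the proof is a semiclassical energy estimate in $\mathcal{B}^m_\rho$ for the linearization of the displayed system. Using the symmetrizer $S$, an integration-by-parts argument yields the basic $L^2$ inequality
\begin{equation*}
\tfrac{d}{dt}(Sw_h, w_h)_{L^2} \;\le\; C_\rho (Sw_h, w_h)_{L^2} + C_\rho \|G_h - r_h\|_{L^2}^2,
\end{equation*}
where $C_\rho$ depends only on $L^\infty$ norms of $a_h, v_h, w_h$ and their first derivatives. One then applies the semiclassical derivatives $(h\partial_x)^\alpha$ for $|\alpha| \le m$ and commutes them through $L$. The decisive point is that every spatial derivative landing on a coefficient carries an extra factor of $h$ --- this is built into the very definition of the spaces $\mathcal{A}^{m+1}_\rho$ and $\mathcal{B}^m_\rho$ --- so each commutator is of the same order as the leading symmetric-hyperbolic terms and no negative power of $h$ appears. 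Together with the Sobolev embedding $H^m \hookrightarrow W^{1,\infty}$ (using $m > n/2 + 1$) and Moser-type tame estimates for compositions with smooth functions, this closes a Gr\"onwall estimate for $w_h$ in $\mathcal{B}^m_\rho$.

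From the linear estimate one constructs the solution by a Picard iteration $w_h^{(k+1)} = \Phi(w_h^{(k)})$ in which the nonlinear coefficients are frozen at the previous iterate; choosing $\sigma$ large enough (depending on $\rho, T$) and $h$ small ensures (i) that $v_h + h^M w_h^{(k)}$ remains in the $L^\infty$ ball where $S$, $A_j$ and $F$ are controlled, by Sobolev embedding $\|h^M w_h^{(k)}\|_{L^\infty} \le C h^M \sigma$, and (ii) that the Gr\"onwall growth over $[0,T]$ is absorbed into the ball of radius $\sigma$ in $\mathcal{B}^m$. Uniqueness follows from the same energy estimate applied to the difference of two solutions.

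The main obstacle I expect is the careful bookkeeping of $h$ powers in the nonlinear terms --- specifically, verifying that $F(a, v_h + h^M w) - F(a, v_h)$ and the analogous differences of the $A_j$ coefficients are of size $h^M$ in $\mathcal{B}^m_\rho$, uniformly on bounded sets of $w \in \mathcal{B}^m_\sigma$. This rests on a tame inequality of the type $\|G(u)\|_{\mathcal{B}^m} \le C(\|u\|_{L^\infty})(1 + \|u\|_{\mathcal{B}^m})$ for smooth $G$, combined with Taylor's theorem. The semiclassical scaling of the function spaces is essential: without the factor of $h$ on each derivative in the norms, commutators of $(h\partial_x)^\alpha$ with variable coefficients would generate negative powers of $h$ and destroy the estimate. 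It is precisely this scaling that allows the result to be applied to geometric-optics ans\"atze whose classical $H^m$-norms --- unlike in standard well-posedness for quasilinear PDEs --- are \emph{not} small.
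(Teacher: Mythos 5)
The paper does not prove this statement at all: it is quoted verbatim from Gu\`es \cite{Gues} for the reader's convenience (with only a remark that the structural hypotheses of \cite[Sec.~1.2.1]{Gues} are irrelevant for this particular result), so there is no ``paper's own proof'' to compare against. Your outline is, in essence, a reconstruction of Gu\`es's original argument: write $\underline{u}_h=v_h+h^M w_h$, derive a quasilinear symmetric hyperbolic system for the corrector $w_h$, prove semiclassical weighted energy estimates in the $\mathcal{B}^m$-type norms where commutators with $(h\partial_x)^\alpha$ cost an extra power of $h$, and close by Moser-type tame estimates, Gr\"onwall, and an iteration, with uniqueness from the same estimate applied to differences. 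Two points deserve more care than your sketch gives them. First, the corrector equation contains the term $h^{-M}\bigl(L(\underline{a}_h,v_h+h^M w_h)-L(a_h,v_h)\bigr)v_h$, which involves $\partial_x v_h$ multiplied by coefficient differences of size $h^M$ in $\mathcal{B}^m$; estimating this product in $\mathcal{B}^m$ is exactly why the approximate solution is required to lie in $\mathcal{A}^{m+1}_\rho$ rather than $\mathcal{A}^{m}_\rho$, and your $G_h$ silently absorbs it. Second, the whole point of the theorem is that the lifespan $[0,T]$ and the constants $h_{\rho,T},\sigma_{\rho,T}$ are uniform in $h$ and in the data within the stated classes, so in the iteration you must check that the Gr\"onwall constants depend only on $\rho$ (through $L^\infty$ bounds on $a_h,v_h$ and one derivative, plus the semiclassical bounds), not on classical $H^m$ norms of $v_h$, which are of size $h^{-m+\text{const}}$; this is the precise mechanism that circumvents the standard small-data theory, and it should be stated as the invariant of the iteration rather than inferred afterwards. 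With those caveats, your route is the same as the cited one, and nothing in it conflicts with how Theorem~\ref{thm:gues} is used in the paper.
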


\begin{remark}
  The structural hypotheses regarding the operator $L$ mentioned in \cite[Sec.~ 1.2.1]{Gues} are not needed for the proof of Theorem \ref{thm:gues}, but rather for the construction of approximate solutions for general hyperbolic symmetrizeable operators.
\end{remark}

We now turn to our particular case. We assume that the space dimension is $n\leq 3$ (in  higher dimensions we would need to produce more terms for the asymptotic expansion of the solutions of \eqref{W1}).
Our goal is to show that the hypotheses of Theorem \ref{thm:gues} are satisfied, for $m=M=3$.
In order to prove the last statement of Theorem \ref{thm1}, we would like to allow the incoming position and  direction of the probing wave to vary, which we model with the parameter $\eta$, see Section~\ref{sec_2}. 
It turns out that it is more convenient to  keep them fixed and apply the rigid motion parameterized by $\eta$ to the nonlinearity, by setting $\alpha_\eta(x) =\alpha(B(x+(y',0)))$ and keeping $\omega=\omega_0$ and $\chi=\chi_0$ fixed. Then we are solving
\begin{equation}\label{rotated}
\begin{aligned}
    \partial_t^2 p-\Delta p -\alpha_\eta(x) \partial_t^2p^2=0,\\  
  p  |_{t=0}
  = 2h \chi(x)\cos\frac{x\cdot\omega}{h},\quad p_t |_{t=0}=0.
\end{aligned}
\end{equation}

We need to convert \eqref{rotated} into an equivalent 1st order system.
Set
\begin{equation*}\label{psi}
\psi = \int_{0}^t p(\tau,x)\, d\tau, \quad v = \nabla \psi. 
\end{equation*}
Notice that $\psi \big|_{t=0}=0$ and so $v \big|_{t=0}=0$. 
Since $\nabla\cdot v=\Delta \psi$, $\psi_t=p$, $\p_t p \big|_{t=0}=0$,
 taking the $t$-antiderivative of the PDE in \eqref{rotated} yields
\begin{equation}\label{system}
  \begin{split}
(1-2\alpha_\eta p)\p_t p &= \nabla\cdot v \\
\p_t v &= \nabla p,
\end{split}
\end{equation}
 and the initial condition 
\begin{equation}\label{IC}
  (p,v)\big|_{t=0}=\Big(2h\chi(x)\cos\frac{x\cdot \omega}{h},0\Big).
\end{equation}
Note that $v$ also satisfies the additional condition  
\begin{equation*}\label{S0}
\nabla\times v=0
\end{equation*}
when $n=3$, interpreted as $\nabla \times (v_1,v_2,0)=0 $ when $n=2$. It is preserved along the flow and satisfied by  the initial conditions, so it is actually redundant.
The process above shows that a solution of \eqref{rotated} satisfies \eqref{system}, \eqref{IC}. 
Conversely, given a solution of \r{system},   \r{IC}, we get that $p$ solves the PDE \r{rotated}, with $p=2h\chi(x)\cos({x\cdot \omega}/{h})$ for $t=0$. About $\p_tp|_{t=0}$, we get from \r{system} and \r{IC} that for $t=0$ we have  $(1-2\alpha_\eta p)\p_tp=\nabla\cdot v=0$, hence $\p_tp=0$, for the small solutions we are interested in. Therefore, $p$ solves \r{rotated}.

We are interested in solutions of \eqref{system}, \eqref{IC} which are small in the $L^\infty$ sense, and in particular ones which satisfy $1-2\a_\eta p\geq b_0>0$, for some fixed $b_0\in (0,1)$.
Taking $\psi\in C^\infty(\R;[0, 1])$ with $ \psi\equiv 0$ on $[1,\infty)$ and $\psi \equiv 1$ on $(-\infty, 1-b_0]$, we observe that the solutions of \eqref{system} that satisfy $1-2\a_\eta p\geq b_0$ for all $t\in [0,T]$ and $x\in \R$ also solve \eqref{system} with $(1-2\alpha_\eta p)$ replaced by $(1-(2\alpha_\eta p)\,\psi(2\alpha_\eta p))$, and vice versa.
Now for such $\psi$ write
\begin{equation}\label{S1}
  \partial_t u+\sum_{j=1}^n S_{\psi}(\alpha_\eta(x),u)^{-1}A_j\p_{x_j}u=0, \qquad u \big|_{t=0}=(p,v)^{\mathrm{T}} \big|_{t=0},
\end{equation}
where $u=(p,v)^{\rm{T}}$, the entries of $A_j\in \R^{(n+1)\times (n+1)}$ are given by $(a^j)_{kl}=-(\delta_{0k}\delta_{jl}+\delta_{l0}\delta_{kj })$ for $j=1,\dots,n$, and 
$S_{\psi}(a,u)=\mathrm{diag}(1-2ap\,\psi(2a p),1,\dots,1)\in C^\infty(\R\times \R^{n+1};\R^{(n+1)\times (n+1)})$ is symmetric and positive definite by the choice of $\psi$.
The quasilinear system \eqref{S1} is then hyperbolic symmetrizeable, and all of its sufficiently small solutions agree with those of \eqref{rotated}.

%

%
%
%
%
%

%

\smallskip

Recall now the profiles produced in Section \ref{sec:GO} (with $\alpha$ replaced by $\alpha_\eta$ everywhere).  
Let $T>0$, $h_0>0$ be  such that smooth solutions to \eqref{W4}, \eqref{W2a} (with $\beta=0$)  and of \eqref{W5}   exist on $[t_0,T]\times \mathbb{R}^n$ 
for $h\in (0,h_0]$ and such that for $t\in [0,T]$ we have 
$|2\alpha_\eta(x)p_{*}^+(t,x)|\ll1-b_0,$ 
where
\begin{equation}\label{S00}
  p_{*}^+(t,x)=\begin{cases}
    \Re(e^{\i\phi_+/h}(ha_{+,0}+h^{2}a_{+,1})), & 0\leq t <t_0,\\
    h U_0\big(t,x,\frac{-t+x\cdot \omega}{h}\big)+h^2 U_1\big(t,x,\frac{-t+x\cdot \omega}{h}\big), & t_0\leq t\leq T.
  \end{cases}      
\end{equation}
with the $a_{+,j}$ as in Section \ref{sec:linear}. For the backward approximate solution, write
\begin{equation}\label{S01}
  p_{*}^-(t,x)=\Re(e^{\i\phi_-/h}(ha_{-,0}+h^{2}a_{-,1})),\quad 0\leq t\leq T.
\end{equation}
Note that the finite speed of propagation for solutions of the linear transport equations \eqref{g01aa} and \eqref{W5} and of Burgers' equation \eqref{W4}, together with the compact support of their initial data, implies that $p_{*}^\pm$ are compactly supported in $x$ for each $t\in [0,T].$
Also notice that the choice of initial conditions for $U_j$, $j=0,1$, implies that $p_{*}^+$ is continuous in $t$, with values in $H^m(\R^n)$ or in $W^{m,\infty}(\R^n)$, for any $m$: indeed, the only possible discontinuity is at $t_0$.
To see that there is no discontinuity, note that for $j=0,1$ and for $t\leq t_0$, $\Re(e^{\i \theta}a_{+,j})$ satisfies the same (linear) transport equation with smooth coefficients that $U_j$ does for $0\leq t-t_0\ll 1$,  and their data match at $t=t_0$.

We first check that \eqref{S00} is an approximate solution of \eqref{rotated}.
The PDEs \eqref{g01aa} for $a_{\pm, j}$, $j=0,1$, (resp. \eqref{W4a}, \eqref{W5} for $U_j$) were  produced by inserting  the ansatz \eqref{go0} multiplied by $h$ (resp. ansatz \eqref{W2}) into the Westervelt equation \eqref{rotated} and matching coefficients at orders $h^{-1}$, $h^0$ and $h^1$. (The coefficient at order $h^{-1}$ cancels automatically due to the a priori choice of the phase; if we had non-constant speed of propagation for the linear equation, that term would correspond to the eikonal equation.)
Therefore the result of inserting \eqref{S00} into \eqref{rotated} in the linear case consists of the contributions of the    terms of \eqref{rotated} at order $h^2$, and in the nonlinear case at order $\geq h^2$ up to order $h^{4}$.
In the nonlinear case, the contributions at orders $h^2$ to $h^4$ consist of finite sums of at most two-fold products of
  $U_0$, $U_1$ and their derivatives.
The  backward propagating approximate  wave \eqref{S01} does not interact with the nonlinearity and propagates  for time $t\in[0,T]$, so it satisfies \eqref{rotated} with $\alpha_\eta=0$ up to a compactly supported $O(h^2)$ term.
So, upon setting $p_*=p_{*}^-+p_{*}^+$ we have
\begin{equation}\label{error1}
  \partial_t^2 p_{*}-\Delta p_{*} -\alpha_\eta\partial _t^2 p_{*}^2=h^2\sum_{k=0}^{2}h^k P_{k}\big(t,x,\frac{-t+x\cdot \omega}{h}\big)+h^2 Q\big(t,x,\frac{t+x\cdot \omega}{h}\big)
\end{equation}
with $P_{k}$, $Q$ smooth and  compactly supported in $x$ for $t\in [0,T]$.
So there exists $\rho>0$ such that for $h<h_0$
\begin{equation}\label{error2}
\partial_t^2 p_{*}-\Delta p_{*} -\alpha_\eta\partial _t^2 p_{*}^2=:R_h^+\big(t,x,\frac{-t+x\cdot \omega}{h}\big)+R_h^-\big(t,x,\frac{t+x\cdot \omega}{h}\big)\in h^2\mathcal{B}_\rho^4.
\end{equation}

We can now use our approximate solutions for \eqref{rotated} to produce ones for the equivalent first order system \eqref{system}. Consider
\begin{equation}\label{approx}
  u_*=(p_{*},v_*)=\Big(p_{*}(t,x),\int_{0}^{t}\nabla_x p_{*}(\tau,x)d\tau\Big).
\end{equation}
We first check that $ u_*\in \mathcal{A}_\rho^4$.
By \eqref{S00}-\eqref{S01} it follows that $p_{*}\in \mathcal {A}_\rho^4$ for some $\rho>0$.
Moreover, for $H\in C_0^\infty(\R\times \R^n \times \mathbb{S}^1)$ write
\begin{equation}\label{ibp}
\begin{split}
  \int_{0}^t \nabla_x \Big(H\Big(\tau,x, &\frac{-\tau+x\cdot \omega}{h}\Big)  \Big)d\tau\\
  &=
  \int_{0}^t (\nabla_x H)\Big(\tau,x,\frac{-\tau+x\cdot \omega}{h}\Big)+\frac{\omega}{h} \partial_ \theta H\Big(\tau,x,\frac{-\tau+x\cdot \omega}{h}\Big) d\tau\\
&=\int_{0}^t (\nabla_x H)\Big(\tau,x,\frac{-\tau+x\cdot \omega}{h}\Big)-\omega \partial_\tau \Big( H\Big(\tau,x,\frac{-\tau+x\cdot \omega}{h} \Big)\Big)\\
&{}\qquad+\omega (\partial_ t H) \Big(\tau,x,\frac{-\tau+x\cdot \omega}{h}\Big) d\tau\\
&=\int_{0}^t (\nabla_x H)\Big(\tau,x,\frac{-\tau+x\cdot \omega}{h}\Big)+\omega  (\partial_ t H) \Big(\tau,x,\frac{-\tau+x\cdot \omega}{h}\Big) \Big)d\tau\\
& \qquad-\omega \Big(H\Big(t,x,\frac{-t +x\cdot \omega}{h}\Big)-H\Big(0,x,\frac{x\cdot \omega}{h}\Big)\Big).
  \end{split}
\end{equation}
Taking 
$$H=\begin{cases}
  \Re(e^{\i \phi_+/h}a_{+,j}), &0\leq t< t_0\\
  U_j, & t_0\leq t\leq T
\end{cases}, \quad j=0,1,$$ 
and combining with \eqref{S00}, the computation above demonstrates that $\int_{0}^{t}\nabla_x p_{*}^+(\tau,x)d\tau\in \mathcal{A}_\rho^4$ for some $\rho$.
Similarly,
$\int_{0}^{t}\nabla_x p_{*}^-(\tau,x)d\tau\in \mathcal{A}_\rho^4$.
Thus $\int_{0}^{t}\nabla_x p_{*}(\tau,x)d\tau\in \mathcal{A}_\rho^4$.

We now check  that $u_*$ is an approximate solution to \eqref{system}.
Indeed, this follows by inserting \eqref{approx} into \eqref{system} and using integration and \eqref{error1}--\eqref{error2} to see that $u_*$ is an approximate solution of \eqref{system} up to an error of the form $(\int_{0}^t\big(R_h^++R_h^-\big) d\tau,0)$.
An argument using the fundamental theorem of calculus  similar to \eqref{ibp} then shows that $\int_{0}^t\big( R_h^++R_h^-\big)d\tau\in h^3\mathcal{B}_\rho^3$ for some $\rho>0$.
Since $1-2\alpha_\eta p_*=1-2\a p_*\,\psi(2\a p_*)$ for small $h$, we observe that $u_*$ is also an approximate solution of \eqref{S1} up to an error in $h^3\mathcal{B}_\rho^3$.

Now we have the following. 
\begin{proposition}\label{approx_sol}
Fix $T>0$ such that the profile equations  \eqref{W4} and \eqref{W5} (with $\alpha$ replaced by $\alpha_\eta$) admit smooth solutions for all $t\in [t_0,T]$, for all $\eta$. By shrinking  $h$, for a given $b_0\in (0,1)$ assume that $|2\alpha_\eta p_*|\ll 1-b_0$ on $[0,T]$, for all $\eta$. Let $\rho>0$  be large enough to ensure that $\alpha_\eta\in \mathcal{A}_\rho^4$  and that for all $\eta$, we have that  $u_*\in \mathcal{A}_\rho^4$ and  it is an approximate solution of the  system \eqref{system}, \eqref{IC} up to an error in $h^3\mathcal{B}_\rho^3$.
Then for such $\rho $ there exist $\sigma_{T,\rho,b_0}$ and $h_{T,\rho,b_0}$  such that for any $\eta$ and any Cauchy data in the space $u_* \big|_{t=0}+h^3\mathcal{B}_\rho^3$ there exists for $h\in (0,h_{T,\rho,b_0}]$ a unique solution $\tilde{u}_*=(\tilde{p}_*,\tilde{v}_*)\in u_*+h^3\mathcal{B}_\sigma^3$ of \eqref{system} on $[0,T]\times \R^n$ satisfying  $2\alpha_\eta \tilde{p}_*\leq 1-b_0$ there.

\end{proposition}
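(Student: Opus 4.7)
The plan is to apply Theorem~\ref{thm:gues} directly to the symmetric hyperbolic system \eqref{S1} with $m=M=3$, and then recover a solution of the original system \eqref{system} by showing that the cutoff $\psi$ plays no role on the solution produced. The choice $m=3$ is dictated by Gu\`es's requirement $m>n/2+1$ together with the standing dimensional restriction $n\le 3$.

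First I would match the data of the proposition to the hypotheses of Theorem~\ref{thm:gues}. The parameter function is $\underline{a}_h:=\alpha_\eta\in\mathcal{A}_\rho^{4}\subset\mathcal{A}_\rho^{m+1}$, which is $h$-independent, so one may take $a_h=\underline{a}_h=\alpha_\eta$. The approximate solution is $v_h:=u_*$; by hypothesis $u_*\in\mathcal{A}_\rho^{4}$ and it satisfies \eqref{S1} modulo a residual $h^3 r_h$ with $r_h\in\mathcal{B}_\rho^{3}$, since $1-2\alpha_\eta p_*\,\psi(2\alpha_\eta p_*)=1-2\alpha_\eta p_*$ on the support of $u_*$ for $h$ small, by the hypothesis $|2\alpha_\eta p_*|\ll1-b_0$. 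The Cauchy datum lies, by hypothesis, in $u_*|_{t=0}+h^3\mathcal{B}_\rho^{3}$. Applying Theorem~\ref{thm:gues} then yields constants $h_{T,\rho,b_0}>0$ and $\sigma_{T,\rho,b_0}>0$ and a unique solution $\tilde u_*=u_*+h^3 w_h$ of \eqref{S1} with $w_h\in\mathcal{B}_\sigma^{3}$, on $[0,T]\times\R^n$. Crucially, Gu\`es's constants depend only on $\rho$ and $T$, not on the particular $(\alpha_\eta,u_*)$, so uniformity in $\eta$ is automatic from the $\eta$-uniformity of $\rho$ that the proposition already provides.

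The final step is to show $2\alpha_\eta\tilde p_*\le1-b_0$, so that $\psi(2\alpha_\eta\tilde p_*)\equiv1$ and $\tilde u_*$ actually solves \eqref{system}. Writing $\tilde p_*=p_*+h^3(w_h)_1$ with $(w_h)_1\in\mathcal{B}_\sigma^{3}$, a standard Gagliardo-Nirenberg/semiclassical Sobolev interpolation (valid since $n\le3$) gives
\[
\|\tilde p_*-p_*\|_{L^\infty}\le C\sigma\,h^{3-n/2},
\]
which vanishes as $h\to0$. Since $|2\alpha_\eta p_*|\ll1-b_0$ by hypothesis, after possibly shrinking $h_{T,\rho,b_0}$ further one obtains the required inequality, and hence $\tilde u_*$ solves \eqref{system} with initial data as prescribed.

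The main obstacle is really only bookkeeping: verifying that the residual $r_h$ arising when $u_*$ is substituted into \eqref{S1} truly lies in $\mathcal{B}_\rho^{3}$ with the same $\rho$ used to control $\alpha_\eta$ and $u_*$ in $\mathcal{A}_\rho^{4}$, and that the quantitative $L^\infty$-gain on the Gu\`es correction is strong enough to absorb into the strict inequality $|2\alpha_\eta p_*|\ll1-b_0$. Both points follow from the explicit computations assembled in \eqref{error1}--\eqref{approx} and the elementary semiclassical embedding $H^m_h\hookrightarrow L^\infty$ for $m>n/2$; there is no further analytic difficulty beyond unpacking Theorem~\ref{thm:gues}.
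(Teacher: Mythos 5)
Your argument is essentially the same as the paper's proof: apply Theorem~\ref{thm:gues} to the cut-off symmetrizable system \eqref{S1} with $m=M=3$, then use a Sobolev-type embedding to see that the $h^3\mathcal{B}_\sigma^3$ correction is $o(1)$ in $L^\infty$ (the paper gets $O(h)$, you get $O(h^{3-n/2})$; either suffices), so the cutoff $\psi$ is inactive and the Gu\`es solution solves \eqref{system}. The only point worth making explicit, as the paper does, is the uniqueness claim for \eqref{system} itself: any solution of \eqref{system}, \eqref{IC} satisfying $2\alpha_\eta\tilde p_*\le 1-b_0$ also solves \eqref{S1} and hence must coincide with the unique solution furnished by Theorem~\ref{thm:gues}.
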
{}
\begin{proof}
An approximate solution $u_*=(p_*,v_*)$ of \eqref{system}, \eqref{IC} for which $p_*$ is  sufficiently small in the $L^\infty $ sense, as is the one in our hypothesis, is also an approximate solution of \eqref{S1}, as already mentioned.
Applying Theorem \ref{thm:gues} for \eqref{S1} with the approximate solution \eqref{approx}, we conclude that for any Cauchy data in $u_* \big|_{t=0}+h^3\mathcal{B}_\rho^3$ there exists a unique solution $\tilde{u}_*^\psi\in u_*+h^3\mathcal{B}_\sigma^3$ of \eqref{S1} on $[0,T]\times \R^n$, for some $\sigma>0$ and for $h$ sufficiently small.
To show that $\tilde{u}_*^\psi=(\tilde{p}^\psi_*,\tilde{v}^\psi_*)$ is also a solution of \eqref{system}, \eqref{IC}, it suffices to show that  
\begin{equation}\label{h_limit}
  w_h\in h^3\mathcal{B}_{\sigma}^3\implies \sup_{0\leq t\leq T}\|w_h(t)\|_{L^\infty(\R^n)}=O(h)
\end{equation} uniformly in $w_h$, 
 for then we can guarantee that
\begin{equation}\label{lower_bound}
  2\alpha_\eta p_*^\psi\leq 1-b_0\quad \text{for all } (t,x)\in [0,T]\times \R^n
\end{equation}
by shrinking $h$, since $p_*$ is small.
Moreover, $\tilde{u}_*^\psi $ must be the only solution of \eqref{system}, \eqref{IC} satisfying \eqref{lower_bound}, because any such solution also satisfies \eqref{S1}.
Now \eqref{h_limit} follows from the $\R^n$ version of the Sobolev embedding theorem (see e.g. \cite[Theorem 3.26]{McLean-book}): since $2>n/2$ for $n\leq 3$, there exists a $C$ such that for each $t\in [0,T]$ we have
\begin{equation*}
  \begin{aligned}
     &\| w_h(t)\|_{L^\infty(\R^n)}\leq C \|w_h(t)\|_{H^2(\R^n)}\leq C \sum_{|\alpha|\leq 2}\|\p^\alpha_x w_h(t)\|_{L^2(\R^n)}
  \\
  &\qquad\leq C h^{-2}\sum_{|\alpha|\leq 2}\|(h\p_x)^\alpha w_h(t)\|_{L^2(\R^n)}  \leq h C\Big( h^{-3}\sum_{|\alpha|\leq 3}\|(h\p_x)^\alpha w_h(t)\|_{L^2(\R^n)}\Big)\leq Ch \sigma
  \end{aligned}
\end{equation*}
by our assumption, and the claim is proved.
\end{proof}

\begin{remark}
Varying the incoming direction $\omega$ and position  
in the initial data does not result in a small perturbation of them, in the sense of Proposition \ref{approx_sol}; note that $\omega$ is divided by $h$ in \eqref{IC}.
This is the reason why we translated and rotated the  nonlinearity instead.  
\end{remark}

Note that the statement of the proposition allows us to construct a true solution of \eqref{system} having precisely the Cauchy data \eqref{IC}.

\begin{proof}[Proof of Theorem \ref{thm1}]
Fix any $T>0$, $\omega=\omega_0\in \S^{n-1}$ and $b_0\in (0,1)$. As discussed in Section \ref{sec_nonl},  assumption \eqref{thm1_eq1} implies that the profile equations \eqref{W4} and \eqref{W5} admit smooth solutions for all $t\in [0,T]$, and thus an approximate solution $u_*=(p_*,v_*)$ to the first order system \eqref{system} under the initial condition \eqref{IC} can be constructed for $t\in [0,T]$.
Moreover, $p_*$ is small in the $L^\infty$ sense when $h$ is.
By Proposition \ref{approx_sol}, there exists $h_{T,b_0}$ and $\sigma_{T,b_0}$ such that for each $\eta$,  \eqref{system}, \eqref{IC} has a unique solution $u=(p,v)$ for $0<h\leq h_{T,b_0}$, differing from an approximate one by an element in $h^3\mathcal{B}_\sigma^3$ and satisfying $2\alpha_\eta p\leq 1-b_0$. 
Then, as discussed immediately following \eqref{IC}, $p$ is the unique solution of \eqref{rotated}. %
Equations \eqref{thm1_eq2} and \eqref{W_B} follow by construction of the approximate solution.
\end{proof}

\section{Numerical experiments}
\subsection{One dimension} \label{sec_1D}
Assuming $\alpha$ depending on one variable only, say $x^n$ (then it is not compactly supported but this is not a problem in this case), the problem becomes 1D. We have
\begin{equation*} 
\partial_t^2 p-\p_x^2 p -\alpha \partial_t^2p^2=0
\end{equation*}
with initial conditions
\begin{equation*} 
u|_{t\ll0}= h\chi(-t+x)\cos\frac{-t+x}{h}.
\end{equation*}
In the applied literature, they pass to variables $(\tau,y)= (t-x,x)$. This transforms \r{W7} into
\[
2\p_\tau \p_x p - \p_x^2 p- \alpha \p_\tau^2p^2=0. 
\]
Then they argue that in this moving time frame, the $x$ derivatives are small, so the $\p_x^2 p$ can be ignored. Then they integrate with respect to $\tau$ to get Burgers' equation
\[
\p_x p -  \alpha p \p_\tau p=0. 
\]
This is the transport equation \r{W6} since $\tau=-\phi$ (the variables in \r{W6} are somewhat different though). 

We work in $[-1,1]\ni x$. Since the speed is one, the time needed to cross from the one end to the other of the interval is $2$ but the wave is centered at $x=-0.7$ at $t=0$ and we stop the computations when $t=T:=1.4$. We take $\alpha(x) = e^{-(x/0.3)^2/2}$, $h=0.02$, and $\chi(x) $ is ``essentially supported'' in $[-1, -0.75]$, where $\chi$ is almost zero; with $\chi\ge0$, $\max\chi=1.5$. Condition \r{thm1_eq1} is fulfilled then with the left hand-side being approximately $0.8$. We compute the packet moving to the right only. 
Then it starts from the left and essentially moves to the right slightly changing its shape. The top travels faster, the bottom slower. This is what Burgers' equation predicts. In Figure~\ref{fig:1}, we show the packet at $t=T$, compared with the linear one. The relative shift on the top, for example, is proportional to the integral of $\alpha$ along the way, which gives the Radon transform of it in the multi-dimensional case. 

\begin{figure}[h] %
   \includegraphics[trim={1.3in 0 0 0}, scale=0.35]{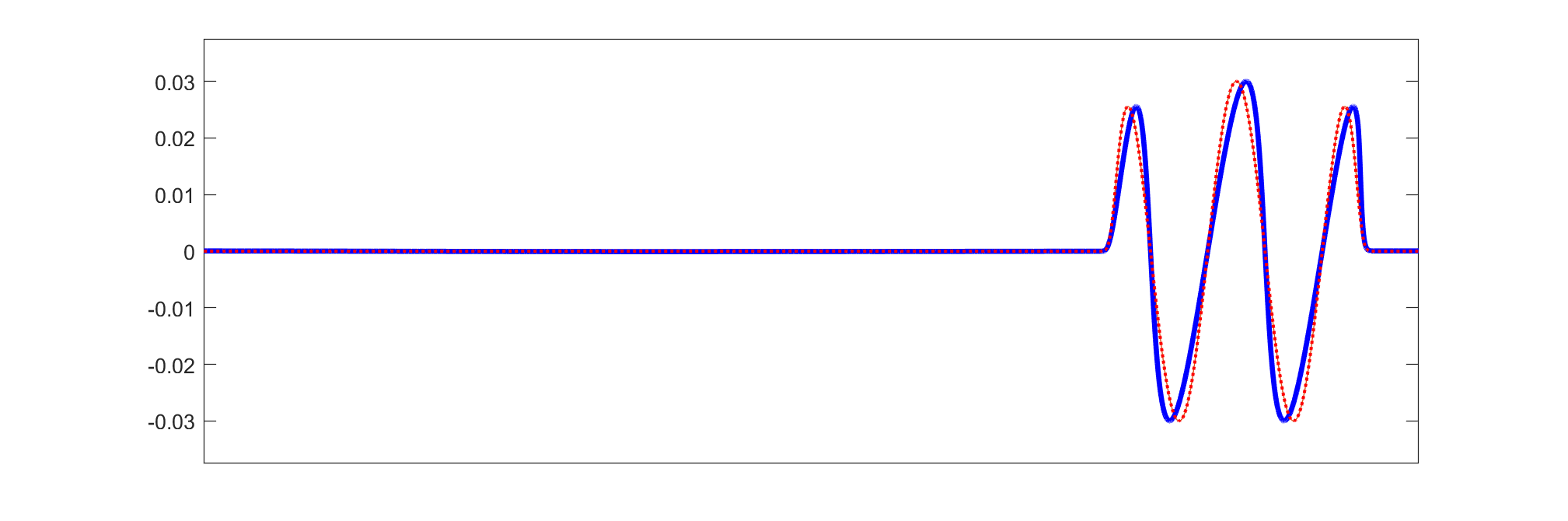}
  \caption{1D: A wave packet at its terminal time $t=T$. It moves to the right. The dotted red curve is the linear solution, just for comparison.}
  \label{fig:1}
\end{figure}

\subsection{A 2D example}

We run a 2D experiment. The initial condition is a wave packet, of the kind shown in Figure~\ref{fig:setup}, moving up. After passing through the nonlinearity, we plot it in Figure~\ref{fig:2} on the right, vs.\ the linear solution on the left. The curving of the wave fronts is natural (for the linear equation, as well), and is due to the  wavelength not being small enough compared to the size of the wave packet, for numerical reasons. The nonlinear solution  has maxima speeding up and minima lagging behind. This effect is stronger in the center where the amplitude is larger. This is due to the Burgers' transport equation. 

\begin{figure}[h] %
  \centering
   \includegraphics[trim={0 1in 1in  0}, scale=0.25]{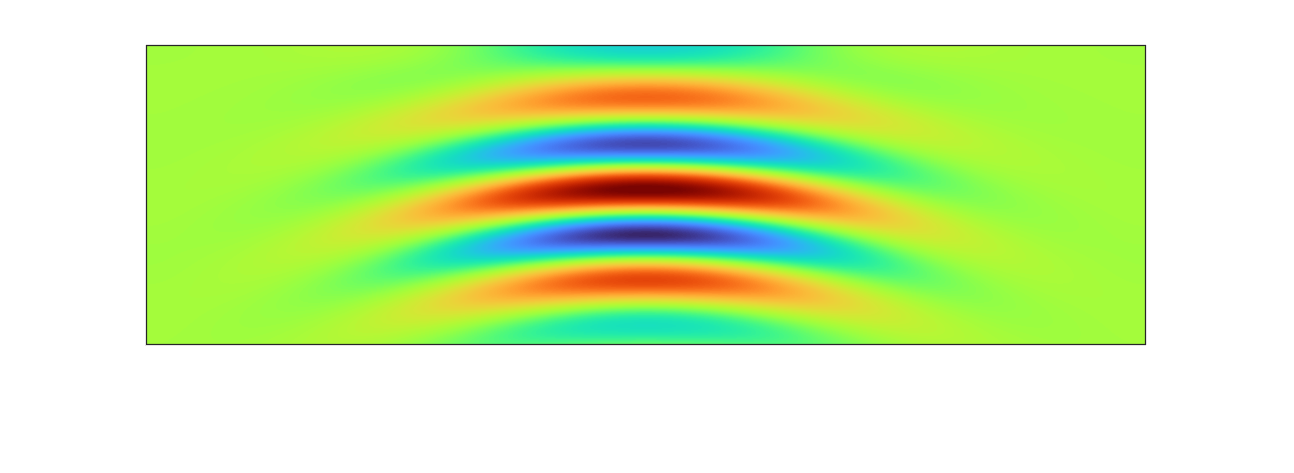}
   \includegraphics[trim={1in  1in 0 0}, scale=0.25]{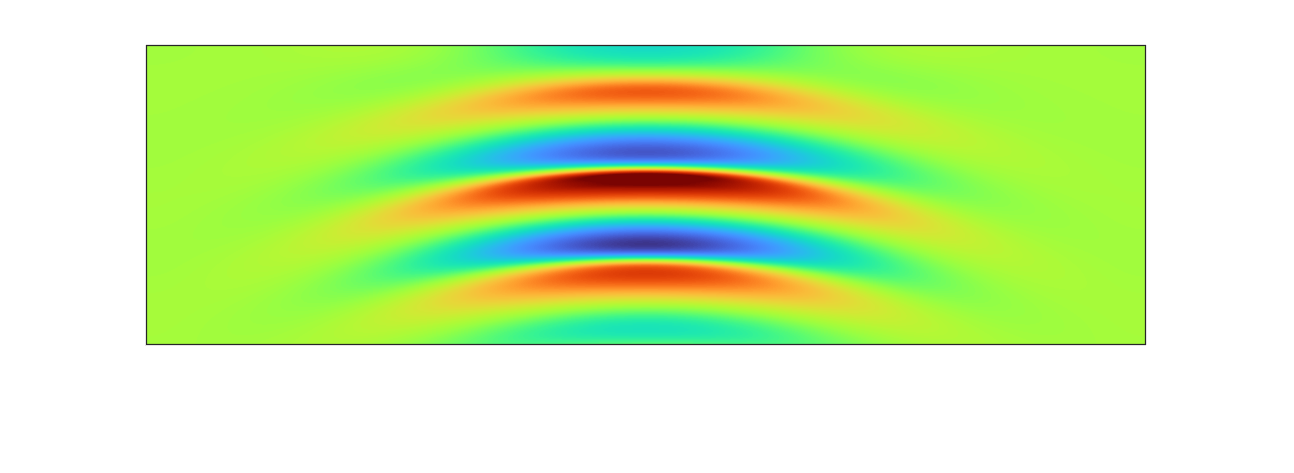}
  \caption{2D: the wave packet at its terminal time. Left: the linear solution. Right: the nonlinear one. The maxima are shifted more upwards, towards the direction of the movement, while the minima are lagging behind.}
  \label{fig:2}
\end{figure}

\section{The Inverse Problem. Proof of Theorem~\ref{thm2}}
\subsection{Proof of Theorem~\ref{thm2}} 
Assume the conditions of Theorem~\ref{thm2} satisfied. 
We send a wave packet in direction $\omega$, fixed,  and our  measurement is  the leading order profile $U_0$, which satisfies \eqref{W6},  after it interacts with the nonlinearity.
We will recover the X-ray transform of $\alpha$ in the direction determined by the vector $\omega$. To do this, we will compare it to the leading order profile $U_0^L$  of the linear solution ($\alpha=0$). For it, we have
\[
U_0^L= \chi({-t+x\cdot \omega})\cos(\theta).
\]
Note first that knowing \r{thm1_eq2} for $0<h\ll1$ allows us to recover $U_0(T,x,\theta)$ for $x\not\in B(0,R)$, and in particular for those $x$ corresponding to endpoints of exiting rays, see also Figure~\ref{fig:setup}. This can be done by recovering the Fourier coefficients of $U_0$ as in  \cite[Proposition~3.2]{S-Antonio-nonlinear2}. 

Now pass to the variables $(t,x)=(s,y+s\omega)$. They are the same as those in Section~\ref{sec_nonl} with $t_0=0$ formally. Then
\[
U_0^L= \chi(y\cdot \omega)\cos(\theta),
\]
while $U_0$ solves \r{W6}. Our data is $U_0(s=T,y,\theta)$ for $y$ such that $\chi(y\cdot \omega)\not=0$, and all $\theta$. Fix such an  $y$.

Recall that \r{W6} is solved by the method of characteristics, see \eqref{W6b}. That method admits the following characterization. In the $(s,y)$ coordinates, each level set $U_0=k$ moves with speed $k\alpha(s)$, staying at the same level,  and the resulting curve is the graph of $U_0$. In particular, the zeros do not move. The points between two zeros can move only within that interval because there is no shock up to $t=T$, and therefore, the characteristics through them cannot intersect those through the zeros, which are vertical lines in the  $(\theta,s)$ plane. With $y$ fixed as above, choose $\theta$ so that $\cos\theta\not=0$. Then $k:= \chi(y\cdot\omega)\cos\theta\not=0$. Along the horizontal line on the graph of $\theta\mapsto U_0^L$, let $d$ be the signed distance to the closest point on the graph of $\theta\mapsto U_0$, which is between the same two consecutive zeros of $\cos\theta$, see Figure~\ref{fig:sines}. Then $d=k\int_0^T\alpha(\sigma) d\sigma$ by the formula following \r{W6b}. Since $k\not=0$ is known, we can recover the integral, and therefore, the X-ray transform of $\alpha$ in the direction $\omega$ along any such ray. 
\begin{figure}[h] %
   \includegraphics[trim={0 0 0 0}, scale=0.5,page=4]{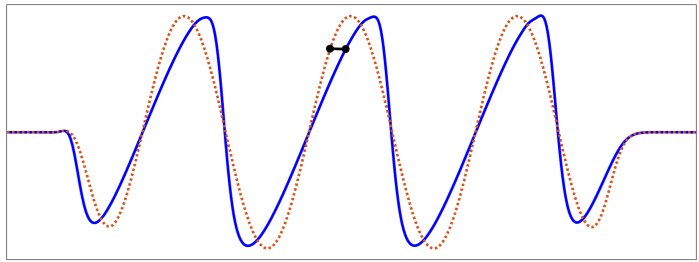}
  \caption{A cross-section along a fixed ray parallel to $\omega$ at $t=T$ compared to the linear solution, dotted. Fixing a level $p=k$, the horizontal shift along it is proportional to $k\int\alpha$ with the integral taken along the ray.}
  \label{fig:sines}
\end{figure}

This concludes the proof of Theorem~\ref{thm2}.

\subsection{Fourier decomposition and higher order harmonics}
When $\alpha\ll1$, the tilt can be  too small to be measured reliably. Then one can measure the second harmonic. 
As we found out in Section~\ref{sec_Burgers}, $U_0$ has a vanishing zeroth harmonic. Then in the coordinates $(s,y)$, 
\[
U_0(s,\theta) = \sum_{k\in\mathbb{Z}\setminus 0} e^{\i k\theta} u_k(s),
\]
where $u_k$ depend on $y$ through $M=\chi(y\cdot\omega)$, $\omega$ fixed. 
Then $u_k$ must solve, see \r{W6},
\be{W7}
\frac{d}{d s} u_k +\frac{\i}2 k \alpha(s)\sum_{k_1+k_2=k}u_{k_1}u_{k_2} =0
\ee
with initial conditions
\be{W8}
u_{1}=u_{-1}=\frac{M}2, \quad u_j=0, \; j\not=\pm1, \quad \text{at $s=0$}.  
\ee
The sum there is a discrete convolution, $u_k*u_k$. Using the same arguments as in \cite[p.38]{S-Antonio-nonlinear}, one can prove that $u_{-k} = \bar u_k$. 

As already mentioned above, see \r{W6c}, it is enough to solve this problem for $\alpha=1$, call the solution $\tilde u_k$; then $u_k(s)= \tilde u_k(\tilde s(s))$ with $\tilde s=\int_0^s\alpha(\sigma)\, d\sigma$ solves \r{W7}, and by uniqueness, it is the only solution up to the shock formation. 

First, $\tilde u_k|_{\tilde s=0}$ are as in \r{W8}. 
 Since $\partial_s = \partial_{\tilde s}$ at $s=0$, we have
\[
\partial_{\tilde s} \tilde u_2 = -\partial_{\tilde s} \tilde u_{-2} = -\i\frac{M^2}4, \quad  \partial_{\tilde s} \tilde u_k=0 \text{ for $k\not=\pm2$}, \quad \text{at $\tilde s=0$}.  
\]
Therefore, the Taylor expansion of $\tilde u$ at $\tilde s=0$ (which is actually analytic up to the shock time) is
\[
\tilde U_0 = M\cos\theta+ \frac{M^2}2\tilde s\sin(2\theta)+O(\tilde s^2).
\]
Note that one can get the same result by  Burgers' equation directly. Passing to the solution $u$, and considering $\alpha$ as a small parameter, we get
\[
 U_0 = \chi(y\cdot\omega)\cos\theta+ \frac12 \chi^2(y\cdot\omega) \sin(2\theta) \int_0^s\alpha(\sigma)\, d\sigma+O(\alpha^2).
\]
The second term is the second harmonic, up to $O(\alpha^2)$. Actually, one can show in the same way that the error above is $O(\alpha^3)$. Therefore, the second harmonic of $U_0$ at $t=T$, linearized near $\alpha=0$,  recovers the X-ray transform of $\alpha$. This has been used in ultrasound: one filters the data to recover the second harmonic only, which is less likely to backscatter right away and create a very strong signal, thus allowing for better sensitivity deeper into the tissue \cite{humphrey2003non}.

We mention here that time-periodic solutions to the diffusive Westervelt equation have been studied in \cite{Kaltenbacher-periodic}.
There, similarly to our analysis, terms are created at harmonic  frequencies which are integer multiples of the excitation frequencies.

%

\end{document}